\documentclass{amsart}
\usepackage{amssymb}
\usepackage{hyperref}
\hypersetup{hidelinks}
\hypersetup{
  pdftitle={Separation Conditions for Iterated Function Systems with a Common Attractor},
  pdfauthor={Cai-Yun Ma},
  pdfsubject={Research article},
  pdfkeywords={Self-similar sets, iterated function systems, open set condition, strong separation condition, chain components}
}

\numberwithin{equation}{section}
\newtheorem{theorem}{Theorem}[section]
\newtheorem{corollary}[theorem]{Corollary}
\newtheorem{lemma}[theorem]{Lemma}
\theoremstyle{definition}
\newtheorem{definition}[theorem]{Definition}
\newtheorem{question}[theorem]{Question}

\theoremstyle{remark}

\newcommand{\R}{\mathbb{R}}

\newcommand{\cH}{\mathcal{H}}
\newcommand{\dimH}{\dim_{\mathrm H}}
\DeclareMathOperator{\diam}{diam}
\DeclareMathOperator{\dist}{dist}

 \title[{\protect\fontsize{5}{6}\protect\selectfont Separation Conditions for Iterated Function Systems with a Common Attractor}]
{Separation Conditions for Iterated Function Systems with a Common Attractor}
\author[C.-Y.~Ma]{Cai-Yun Ma}
\address[C.-Y Ma]{HUN-REN-BME Stochastics Research Group, M\H{u}egyetem rkp. 3., H-1111 Budapest, Hungary}
\email{macaiyun@edu.bme.hu}
\date{}
\subjclass[2020]{Primary 28A80; Secondary 28A78}
\keywords{Self-similar sets, iterated function systems, open set condition,
strong separation condition, chain components}

\begin{document}
\begin{abstract}
	Let $K$ be the non-singleton attractor of a finite iterated function
	system
	$\Phi=\{\phi_i(x)=a_i O_i x+t_i\}_{i=1}^m$
	on $\mathbb{R}^d$, where   $0<a_i<1$,
	$t_i\in\mathbb{R}^d$  and the $O_i$ are orthogonal transformations
	on $\mathbb{R}^d$.
	Suppose that $\Phi$ satisfies the open set condition, but not
	the strong separation condition.
	We show that $K$ cannot be generated by any finite iterated
	function system $\Psi$ of similitudes satisfying the strong separation
	condition.
	This removes the homogeneity assumption from a result of
	Feng, Ruan and Xiong and gives a negative answer to a folklore
	question about the separation conditions on the generating
	iterated function systems of self-similar sets.
\end{abstract}

\maketitle
\pagestyle{plain}
\thispagestyle{plain}

\section{Introduction}\label{sec:introduction}

We study separation conditions for iterated function systems of
similitudes that generate the same self-similar set.

Fix an integer $d\geq1$. An iterated function system (IFS) on $\R^d$
is a finite family $\Phi=\{\phi_i\}_{i=1}^m$ of contracting maps,
where $m\geq2$.    By Hutchinson's theorem~\cite{Hutchinson1981},
there is a unique non-empty compact set $K\subset\R^d$ such that
\begin{equation*}
K=\bigcup_{i=1}^m \phi_i(K).
\end{equation*}
  The set $ K $ is called the
\emph{attractor} of $\Phi$. We restrict attention to IFSs of
similitudes, whose maps have the form
\begin{equation*}  	\phi_i(x)=a_iO_ix+t_i,\qquad i=1,\ldots,m, \end{equation*}
where $0<a_i<1$, $t_i\in\mathbb{R}^d$ and $O_i$ is an orthogonal transformation on $\mathbb{R}^d$.
In this case $K$ is a \emph{self-similar set}, and $\Phi$ is called
a \emph{generating IFS} of $K$. Throughout the paper, generating
IFSs are finite and consist  of contracting similitudes.
For background on self-similar sets and measures, we refer to
\cite{BaranySimonSolomyak2023,BishopPeres2017,Falconer2003}.

We recall two     fundamental separation conditions. The IFS $\Phi$
satisfies the \emph{open set condition} (OSC) if there is a non-empty
open set $U\subset\R^d$ such that   $ \phi_i(U) $,$  i = 1,\ldots,m, $
are disjoint subsets of $ U $.
The IFS satisfies
the \emph{strong separation condition} (SSC) if $ \phi_i(K) $, $ i= 1,\ldots,m $, are pairwise disjoint.

The SSC implies the OSC. Indeed, let
$\delta=\min_{i\neq i'}\dist(\phi_i(K),\phi_{i'}(K))>0$ and let
$U_\varepsilon=\{x\in\R^d:\dist(x,K)<\varepsilon\}$. If
$0<\varepsilon<\delta/2$, then $\phi_i(U_\varepsilon)\subset U_\varepsilon$
and the sets $\phi_i(U_\varepsilon)$ are pairwise disjoint.

The converse fails, even for totally disconnected attractors.    For example, the IFS
$\{x/5,(x+3)/5,(x+4)/5\}$ on $\R$ satisfies the OSC with feasible
open set $(0,1)$, but its last two pieces meet at $4/5$;
see~\cite{FengRuanXiong2023}.

Let $s_\Phi>0$ be the \emph{similarity dimension} of $\Phi$, defined by
\begin{equation}\label{eq:similarity-dimension}
	\sum_{i=1}^m a_i^{s_\Phi}=1.
\end{equation}
Under the OSC, Hutchinson's theorem~\cite[Section~5.3]{Hutchinson1981}
gives
\begin{equation}\label{eq:osc-dimension}
	\dimH K=s_\Phi
	\qquad\text{and}\qquad
	0<\cH^{s_\Phi}(K)<\infty,
\end{equation}
where $\dimH$ and $\cH^t$ denote Hausdorff dimension and
$t$-dimensional Hausdorff measure, respectively.
In particular, the preceding example has dimension $\log3/\log5<1$
and is totally disconnected. Further characterizations of the OSC
were obtained by Bandt and Graf~\cite{BandtGraf1992} and
Schief~\cite{Schief1994}. Schief showed that the OSC is equivalent
to $\cH^{s_\Phi}(K)>0$, though this criterion is often difficult
to check and depends on the similarity dimension of the given IFS.

A self-similar set may admit more than one generating IFS.
The algebraic restrictions on such systems have been studied
in~\cite{FengWang2009,ElekesKeletiMathe2010,DengLau2013,
	DengLau2017,Algom2020}. Our concern here is whether different
generating systems can have different separation properties.
More precisely, we consider the following folklore question
recorded in~\cite[Question~1.1]{FengRuanXiong2023}.

\medskip
\noindent\textbf{Question.}
\emph{Can a self-similar set admit both an SSC generating IFS
	and another generating IFS satisfying the OSC but not the SSC?}
\medskip

As reported in~\cite{FengRuanXiong2023}, Ka-Sing Lau and Jun Jason Luo
brought this question to the attention of Feng, Ruan and Xiong.
Lau and Luo also informed them that Mariusz Urba\'nski had asked the
same question in a private communication.

	Feng, Ruan and Xiong~\cite[Theorem~1.2]{FengRuanXiong2023}
	answered this question negatively when the OSC generating IFS is
	\emph{homogeneous}, that is, when all its contraction ratios are equal.
	The orthogonal parts may vary, and the SSC generating IFS need not
	be homogeneous.

	Our main result removes this homogeneity assumption and gives a
	negative answer for arbitrary finite IFSs of contracting similitudes.

\begin{theorem}\label{thm:main}
	Let $K$ be the non-singleton attractor of a finite IFS
	$\Phi=\{\phi_i\}_{i=1}^m$ of contracting similitudes on
	$\mathbb{R}^d$, $d\geq 1$.
	Suppose that $\Phi$ satisfies the OSC, but not the SSC.
	Then $K$ cannot be generated by any finite IFS $\Psi$ of contracting
	similitudes on $\mathbb{R}^d$ satisfying the SSC.
\end{theorem}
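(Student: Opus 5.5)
We argue by contradiction: suppose $\Psi=\{\psi_j\}_{j=1}^n$ is an SSC generating IFS of $K$. The idea is to find a scale-invariant quantity that $\Phi$ forces to degenerate and $\Psi$ keeps bounded away from zero. Since $\Psi$ satisfies the SSC, $K$ is uniformly disconnected. More precisely, let $\delta=\min_{j\neq j'}\dist(\psi_j(K),\psi_{j'}(K))>0$ and let $b=\min_j b_j$ be the smallest contraction ratio of $\Psi$. Iterating the SSC gives the following \emph{$\varepsilon$-chain property}. For every $x\in K$ and every $0<r\le\diam K$, there is a cylinder $\psi_J(K)\ni x$ with $b\,r\lesssim\diam\psi_J(K)\le r$ whose distance to $K\setminus\psi_J(K)$ is at least $c\,\diam\psi_J(K)$, where $c=\delta/\diam K$. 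In other words, every small ball in $K$ sits inside a clopen piece of comparable size that is separated from the rest of $K$ by a gap proportional to its size. These pieces are the chain components at scale $c\,\diam\psi_J(K)$.

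Next we extract from the failure of the SSC for $\Phi$ a configuration violating this. Pick $i\ne i'$ and a point $z\in\phi_i(K)\cap\phi_{i'}(K)$. Consider $\phi_\omega(z)$ for long words $\omega$. Near $\phi_\omega(z)$ the two cylinders $\phi_{\omega i}(K)$ and $\phi_{\omega i'}(K)$ touch, and both have diameter comparable to $a_\omega$. Take the separated $\Psi$-cylinder $P$ at scale $r\approx a_\omega\min(a_i,a_{i'})\diam K$ containing $\phi_\omega(z)$. We would like to derive a contradiction by showing that $P$ must contain all of $\phi_{\omega i}(K)\cup\phi_{\omega i'}(K)$, which is too big. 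This naive version fails, however: the separated piece can simply be a small part of $\phi_{\omega i}(K)$ if that set is itself disconnected. The real task is therefore to use the OSC in a quantitative way. By Schief's theorem, the OSC gives $0<\cH^{s}(K)<\infty$ with $s=s_\Phi$, and in fact $\cH^s|_K$ is Ahlfors $s$-regular. Since $\Psi$ also generates $K$, we get $s_\Psi\ge s$. Moreover, $\Psi$ has the SSC, hence the OSC, so $s_\Psi=\dimH K=s$. Both systems therefore carry the same natural measure $\mu=\cH^s|_K/\cH^s(K)$, which is self-similar for both: $\mu=\sum_i a_i^s\,\mu\circ\phi_i^{-1}=\sum_j b_j^s\,\mu\circ\psi_j^{-1}$.

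The key step, and the main obstacle, is to show that a touching pair $\phi_i(K)\cap\phi_{i'}(K)\ne\emptyset$ persists under the separated $\Psi$-decomposition at every scale, producing arbitrarily small relative gaps. The plan is a rigidity/blow-up argument. Take tangent sets (weak limits in the Hausdorff metric) of $K$ rescaled around the points $\phi_\omega(z)$ at scale $a_\omega$. Any such limit contains rescaled, rotated copies $\phi_i(K')\cup\phi_{i'}(K')$, with $K'$ an isometric copy of $K$, that intersect. On the other hand, every blow-up of an SSC set is a union of similar copies of $K$ with mutual gaps at least $c$ times their size, and the set of such gap ratios is closed under limits. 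So it suffices to show that the intersecting copies cannot be absorbed into one separated copy. For this we would use measure and diameter bookkeeping. A separated $\Psi$-piece $Q$ containing points of both $\phi_{\omega i}(K)$ and $\phi_{\omega i'}(K)$ near $\phi_\omega(z)$ must contain neighbourhoods in $K$ of the touching point. Self-similarity of $\Psi$ at arbitrarily fine scales around that point then produces a nested sequence of separated pieces shrinking to $\phi_\omega(z)$, each containing points of both cylinders. Since the point lies in both, these shrinking separated pieces would ultimately have to split $\phi_{\omega i}(K)$ relative to $\phi_{\omega i'}(K)$ at a point where they actually meet. That is impossible, because a separated clopen piece containing a common point contains all points of $K$ within distance $c\cdot$size, including points of both cylinders on every scale.

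To close the argument, we still need a genuine lower bound. We use the OSC of $\Phi$ (via Bandt–Graf: neighbour maps $\phi_\omega^{-1}\phi_{\omega'}$ of comparable size form a finite set) to choose $z$ and $\omega$ so that the local configuration around $\phi_\omega(z)$ repeats with finitely many types. A pigeonhole argument over these finitely many neighbour types, combined with the separated-piece structure of $\Psi$ and equality of the measures $\mu$, then shows that some $\Psi$-cylinder boundary must cut through a neighbour pair with zero gap. This contradicts $\delta>0$ and establishes the theorem. I expect making this finite-type/pigeonhole step rigorous without homogeneity (where Feng–Ruan–Xiong used a common ratio) to be the hardest part. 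Our first attempt will be to replace word length by the stopping-time family $\{\omega: a_\omega\le r<a_{\omega^-}\}$.
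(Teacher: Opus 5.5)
Your proposal has a genuine gap. It never reaches a contradiction, and the mechanism it aims for does not exist.

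Everything you extract from $\Psi$ amounts to uniform disconnectedness: separated clopen pieces of comparable size at every scale, with gap ratio bounded below, also for blow-ups. This is fully compatible with touching $\Phi$-pieces.

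Your reduction ``it suffices to show that the intersecting copies cannot be absorbed into one separated copy'' is false outright. Every $\Psi$-cylinder $\psi_J(K)$ contains the touching pair $\psi_J\phi_i(K)$, $\psi_J\phi_{i'}(K)$, of size comparable to its own, so absorption always happens.

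The claim that touching produces arbitrarily small relative gaps also fails, already for the introduction's example $\{x/5,(x+3)/5,(x+4)/5\}$. Write $K$ as the base-$5$ expansions with digits in $\{0,3,4\}$. The level-$k$ intervals $[0.w,\,0.w+5^{-k}]$ then occur in runs of at most three consecutive intervals: a run $P3,P4,(P+1)0$ cannot be extended, since the digits $1,2$ are excluded. Distinct runs are at distance at least $5^{-k}$. So this $K$ has relative gaps at least $1/3$ at all scales, yet $\phi_2(K)\cap\phi_3(K)=\{4/5\}$. Hence ``That is impossible, because \dots'' is a non sequitur: a separated piece containing a touching point simply contains points of both $\Phi$-cylinders. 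No argument using only this local geometry can separate the two situations.

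The closing pigeonhole step is only announced, and its premise is wrong. The OSC does not force finitely many neighbour maps. For $\{ax,(1-a)x+a\}$ with $\log a/\log(1-a)\notin\mathbb{Q}$, the OSC holds, but the adjacent cylinders $\phi_{12^p}([0,1])$ and $\phi_{21^q}([0,1])$ give infinitely many neighbour maps of comparable size. Bandt--Graf only say that neighbour maps do not accumulate at the identity.

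What is missing is a global, scale-covariant invariant of $K$ that is exactly additive over strongly separated pieces and strictly subadditive over touching ones. It must then be played against $\sum_i a_i^s=\sum_j b_j^s=1$; your observation $s_\Psi=s_\Phi=\dimH K$ is the right source of these weights.

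The paper uses the number $N(r)$ of $r$-chain components of $K$. The SSC of $\Psi$ gives $N(r)=\sum_j N(r/b_j)$ for $r<\delta$, and the touching in $\Phi$ gives $\sum_i N(r/a_i)\geq N(r)+1$. Integrating the latter against $r^{s-1}\,dr$ and changing variables shows that $F(t)=\sum_i a_i^s\int_t^{t/a_i}r^{s-1}N(r)\,dr$ is strictly increasing near $0$. Substituting the SSC identity gives $F(t)=\sum_j b_j^sF(t/b_j)$ with $t/b_j>t$, which is a contradiction.

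Nothing playing the role of this counting and averaging step appears in your plan.
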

In particular, if a self-similar set admits an SSC generating IFS,
then every OSC generating IFS of that set must also satisfy the SSC.
Together with the standard dimension and measure criteria for the
OSC, this yields the following characterization.
\begin{corollary}\label{cor:equivalent-criteria}
	Let $\Phi=\{\phi_i \}_{i=1}^m$ be a finite IFS of contracting
	similitudes on $\R^d$ with non-singleton attractor $K$ and similarity dimension $s_\Phi$.   Write $s= \dimH K $   and suppose  that  	$K$ admits a generating IFS satisfying the SSC.
	Then $\Phi$ satisfies the SSC if and only if $s_\Phi=s$.
	If $\Phi$ does not satisfy the SSC, then $s_\Phi>s$ and there exist
	$1\leq i<i'\leq m$ such that
	\begin{equation*}
	\cH^s\bigl(\phi_i(K)\cap \phi_{i'}(K)\bigr)>0.
\end{equation*}
\end{corollary}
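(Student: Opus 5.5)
We derive Corollary~\ref{cor:equivalent-criteria} from Theorem~\ref{thm:main}, Hutchinson's result \eqref{eq:osc-dimension} and Schief's theorem. Fix an SSC generating IFS $\Psi$ of $K$. Since SSC implies OSC, \eqref{eq:osc-dimension} applied to $\Psi$ gives $s=\dimH K=s_\Psi$ and $0<\cH^s(K)<\infty$. This finiteness and positivity of $\cH^s(K)$ will be used throughout.

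For the equivalence, suppose first that $\Phi$ satisfies the SSC. Then it satisfies the OSC, and \eqref{eq:osc-dimension} gives $s_\Phi=\dimH K=s$. Conversely, suppose $s_\Phi=s$. Then $\cH^{s_\Phi}(K)=\cH^s(K)>0$, so Schief's criterion~\cite{Schief1994} shows that $\Phi$ satisfies the OSC. Since $K$ also has the SSC generating IFS $\Psi$, Theorem~\ref{thm:main} forces $\Phi$ to satisfy the SSC; otherwise $\Phi$ would be an OSC-but-not-SSC generating IFS of a set that also has an SSC generating IFS.

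Now assume $\Phi$ fails the SSC. By the equivalence, $s_\Phi\ne s$. The general inequality $\dimH K\le s_\Phi$ (the standard covering bound by cylinder sets) then gives $s_\Phi>s$. For the overlap claim, suppose for contradiction that $\cH^s(\phi_i(K)\cap\phi_{i'}(K))=0$ for all $i\ne i'$. Then
\[
\cH^s(K)=\sum_{i=1}^m\cH^s(\phi_i(K))=\Bigl(\sum_{i=1}^m a_i^s\Bigr)\cH^s(K).
\]
Since $0<\cH^s(K)<\infty$, this yields $\sum_i a_i^s=1$, i.e.\ $s_\Phi=s$ by \eqref{eq:similarity-dimension}. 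That contradicts $s_\Phi>s$. Hence some pair $i<i'$ has $\cH^s(\phi_i(K)\cap\phi_{i'}(K))>0$.

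The only substantive step is the converse direction of the equivalence. It needs both Schief's theorem, to upgrade the dimension equality to the OSC, and the main theorem. Everything else is measure bookkeeping using that $\cH^s(K)$ is finite and positive and scales by $a_i^s$ under $\phi_i$.
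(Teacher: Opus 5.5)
Your proposal is correct and follows essentially the same route as the paper: Schief's theorem together with Theorem~\ref{thm:main} rules out $s_\Phi=s$ when $\Phi$ fails the SSC, and the additivity and scaling of $\cH^s$ give the overlap claim. The only cosmetic difference is that you obtain $s\le s_\Phi$ from the standard covering bound $\dimH K\le s_\Phi$. The paper instead derives $\sum_i a_i^s\ge 1$ from subadditivity of $\cH^s$, using $0<\cH^s(K)<\infty$.
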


A related question was raised by Elekes, Keleti and
M\'ath\'e~\cite[Question~9.3]{ElekesKeletiMathe2010}.

\medskip
\noindent\textbf{Question.}
\emph{If $K$ is generated by an IFS satisfying the SSC and $h$ is a
	similitude with $h(K)\subseteq K$, must $h(K)$ be relatively open
	in $K$?
	Equivalently, must $h(K)$ be a finite union of cylinder sets $ 	\psi_{j_1}\circ\cdots\circ \psi_{j_\ell}(K) $
	of the given SSC generating IFS $\{\psi_j\}_{j=1}^n$?}
\medskip

As explained in~\cite[Section~4]{FengRuanXiong2023}, an affirmative
answer to this question would also rule out the existence of two
generating systems as in the separation problem. Xiao~\cite{Xiao2024}
proved the relative openness property when the generating maps have
a common contraction ratio and a common orthogonal part, but
constructed a counterexample when the orthogonal parts vary. Thus the relative openness question has a negative answer in general.

Nevertheless, Theorem~\ref{thm:main} gives a precise characterization
of those contracting self-embeddings whose images are relatively open
in $K$. Equivalently, it determines exactly which self-embeddings can
occur as maps of an OSC generating IFS. This yields the following
corollary.
\begin{corollary}\label{cor:embedding-completion}
	Let $K\subset\R^d$ be a non-singleton self-similar set admitting a
generating IFS satisfying the SSC.
	Let $h:\mathbb{R}^d\to\mathbb{R}^d$ be a contracting similitude
	such that $h(K)\subseteq K$.
	Then the following are equivalent:
	\begin{enumerate}
		\item [(i)] $h(K)$ is relatively open in $K$;
		\item [(ii)] there exists a finite generating IFS of contracting
		similitudes of $K$ satisfying the OSC that contains $h$;
		\item [(iii)] there exists a finite generating IFS of contracting
		similitudes of $K$ satisfying the SSC that contains $h$.
	\end{enumerate}
	Moreover, if one of these conditions holds, then for every fixed SSC
	generating IFS $\Psi=\{\psi_j\}_{j=1}^n$ of $K$, there exist an
	integer $\ell\geq1$ and a set
	$\mathcal{J}\subseteq\{1,\ldots,n\}^{\ell}$ such that
	\begin{equation*}
	\{h\}\cup
	\bigl\{
	\psi_{j_1}\circ\cdots\circ \psi_{j_\ell}:
	(j_1,\ldots,j_\ell)\in\mathcal{J}
	\bigr\}
\end{equation*}
	is a generating IFS of $K$ satisfying the SSC.
\end{corollary}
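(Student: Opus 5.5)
We prove the cycle (iii)$\Rightarrow$(ii)$\Rightarrow$(i)$\Rightarrow$(iii), with the ``moreover'' part coming out of the last step. Throughout, fix an SSC generating IFS $\Psi=\{\psi_j\}_{j=1}^n$ of $K$. Write $\psi_J=\psi_{j_1}\circ\cdots\circ\psi_{j_\ell}$ for $J=(j_1,\ldots,j_\ell)$, and write $r=\min_j r_j$, where $r_j$ is the ratio of $\psi_j$.

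The implication (iii)$\Rightarrow$(ii) is immediate, since the SSC implies the OSC.

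For (ii)$\Rightarrow$(i), let $\Phi$ be an OSC generating IFS of $K$ containing $h$. Since $K$ admits the SSC generating IFS $\Psi$, Theorem~\ref{thm:main} shows that $\Phi$ itself satisfies the SSC. Hence the pieces $\phi_i(K)$ are pairwise disjoint compact sets whose union is $K$. Each piece is the complement in $K$ of a finite union of the other (closed) pieces, so each is relatively open. In particular $h(K)$ is relatively open in $K$.

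For (i)$\Rightarrow$(iii), suppose $h(K)$ is relatively open in $K$. It is also compact.

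\begin{enumerate}
\item[(a)] Under the SSC, the cylinders $\psi_J(K)$ with $|J|=\ell$ form a partition of $K$ into clopen sets of diameter at most $(\max_j r_j)^\ell\diam K$. This tends to $0$ as $\ell\to\infty$.
\item[(b)] Choose $\eta>0$ with $\dist\bigl(h(K),K\setminus h(K)\bigr)>\eta$. This is possible because $h(K)$ and $K\setminus h(K)$ are disjoint and both compact; if $K\setminus h(K)=\emptyset$ the bound holds trivially.
\item[(c)] Take $\ell$ so large that every level-$\ell$ cylinder has diameter less than $\eta$. Then each level-$\ell$ cylinder lies either entirely inside $h(K)$ or entirely inside $K\setminus h(K)$.
\item[(d)] Let $\mathcal{J}=\{J\in\{1,\ldots,n\}^\ell:\psi_J(K)\cap h(K)=\emptyset\}$. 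By (c), $K$ is the disjoint union of $h(K)$ and the sets $\psi_J(K)$, $J\in\mathcal{J}$. This is a disjoint union of compact pieces, so $\{h\}\cup\{\psi_J:J\in\mathcal{J}\}$ generates $K$ by uniqueness in Hutchinson's theorem and satisfies the SSC.
\end{enumerate}

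There are two caveats in step (d). First, a generating IFS needs at least two maps. If $\mathcal{J}=\emptyset$ then $h(K)=K$, which is impossible: $h$ is a strict contraction and $K$ is not a singleton, so $\diam h(K)<\diam K$. Second, each $\psi_J$ with $J\in\mathcal{J}$ is a contracting similitude. This gives (iii) and also the ``moreover'' statement for this $\Psi$. Since $\Psi$ was arbitrary, it holds for every SSC generating IFS.

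The main obstacle is the step (ii)$\Rightarrow$(i). It rests entirely on Theorem~\ref{thm:main}, which upgrades the OSC to the SSC. The reverse direction is the elementary compactness argument in (a)--(d).
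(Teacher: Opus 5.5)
Your proof is correct and follows essentially the same route as the paper: Theorem~\ref{thm:main} upgrades an OSC generating IFS containing $h$ to an SSC one, and the converse is the same compactness argument. That argument uses level-$\ell$ cylinders of $\Psi$ of diameter less than $\dist\bigl(h(K),K\setminus h(K)\bigr)$, and the difference is only organizational, since your cycle (iii)$\Rightarrow$(ii)$\Rightarrow$(i)$\Rightarrow$(iii) merges the paper's separate steps (ii)$\Rightarrow$(iii) and (iii)$\Rightarrow$(i) into one.
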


Consequently, although a contracting self-embedding of an SSC
self-similar set need not have relatively open image, precisely those
self-embeddings with relatively open image can occur as maps in an
OSC generating IFS. In particular, a self-embedding whose image is
not relatively open in $K$, such as the one constructed
in~\cite{Xiao2024}, cannot belong to any OSC generating IFS of $K$.

  	We briefly compare our argument with that of Feng, Ruan and
  	Xiong~\cite{FengRuanXiong2023}.   Their proof uses characteristic
  	vectors to record the intersection structure of generating systems.
  	Assuming an SSC generating system exists, a finiteness consequence
  	of a theorem of Elekes, Keleti and M\'ath\'e~\cite{ElekesKeletiMathe2010}
  	allows them to choose a maximal characteristic vector among homogeneous OSC generating systems whose contraction ratios lie in a	fixed interval bounded away from zero.  Homogeneity allows auxiliary
  	systems to return to this interval by composition with suitable
  	iterates of the non-SSC system. The strict increase of characteristic
  	vectors under composition then yields a contradiction. For unequal
  	contraction ratios, fixed-level iteration no longer provides a common
  	rescaling, while refining individual cylinders need not preserve the
  	required monotonicity. Thus the difficulty lies in the scale
  	normalization, rather than in the composition inequality itself,
  	which does not require homogeneity.

  	Our proof instead uses the number $N(r)=N_K(r)$ of $r$-chain
  	components of $K$; see Section~\ref{sec:components}. This function
  	depends only on the attractor and the scale, rather than on a
  	particular generating system. Suppose, to the contrary, that
  	$K$ is generated by an OSC system $\Phi=\{\phi_i\}_{i=1}^m$
  	which fails the SSC and an SSC system $\Psi=\{\psi_j\}_{j=1}^n$.
  	Let $a_i$ and $b_j$ be their contraction ratios, respectively. Both systems
  	have similarity dimension $s=\dimH K$, so $\sum_i a_i^s=\sum_j b_j^s=1$. The SSC gives exact additivity of
  	the component count at small scales, whereas an intersection 	between  two distinct $ \Phi $-pieces  causes a strict loss when their  	components are counted in the union. Integrating the resulting	counting inequality between two positive scales leads us to	consider the auxiliary function $ F $  defined   in \eqref{def-ft}. We show that $F$ is strictly increasing	near zero. On the other hand, exact additivity for the SSC system	gives
  	\begin{equation*}  F(t)=\sum_{j=1}^n b_j^sF(t/b_j) \end{equation*}  for all sufficiently small
  	$t>0$. This is impossible: the
  	right-hand side is a convex combination of values strictly larger	than $F(t)$.  The argument uses only integrals over	positive scale intervals and does not require a common contraction ratio.

  The paper is organized as follows. Section~\ref{sec:components} introduces chain components and establishes
  the counting inequalities used in the proof. In Section~\ref{sec:proof}, we combine these inequalities with  an integral argument to prove Theorem~\ref{thm:main}.
Section~\ref{sec:consequences} is devoted to the proofs of
Corollaries~\ref{cor:equivalent-criteria} and \ref{cor:embedding-completion}.
Finally, Section~\ref{sec:questions} poses a question about graph-directed systems.

\section{Chain components and counting inequalities}\label{sec:components}

In this section, we introduce the notion of chain components and the
associated component-counting function used by Rao, Ruan and Yang
\cite{RaoRuanYang2008}. We then prove two elementary lemmas to be used
in the proof of Theorem~\ref{thm:main}.

\begin{definition}
	Let $A\subset\R^d$ be a non-empty compact set and let $r>0$.
	We say that $A$ is \emph{$r$-connected} if for any $x,y\in A$,
	there is an $r$-chain connecting $x$ and $y$. That is, there
	are an integer $\ell\geq0$ and points
	$x=x_0,x_1,\ldots,x_\ell=y$ in $A$ such that
	$|x_{k+1}-x_k|\leq r$ holds for $0\leq k<\ell$.

	We call a non-empty subset $F\subseteq A$ an \emph{$r$-component}
	of $A$ if $F$ is $r$-connected, but for any
	$F\subsetneq F'\subseteq A$, $F'$ is not $r$-connected.
	Let us denote by $N_A(r)$ the number of $r$-components of $A$,
	which is finite by the compactness of $A$.
\end{definition}

By \cite[Lemma~1]{RaoRuanYang2008}, for every non-empty compact
set $A\subseteq\R^d$, the function
$N_A:(0,\infty)\to\mathbb N$ is non-increasing,
right-continuous, and locally constant at every point of continuity.
In particular, $N_A$ is Borel measurable and locally bounded on
$(0,\infty)$. Moreover,
\begin{equation}\label{eq:large-scale}
	N_A(r)=1\qquad\text{for }r\geq\diam A,
\end{equation}
where $\diam A:=\sup\{|x-y|:x,y\in A\}$ denotes the diameter of $A$.
For any similitude $h:\R^d\to\R^d$ with similarity ratio $c>0$,
we have
\begin{equation}\label{eq:scaling}
	N_{h(A)}(r)=N_A(r/c)\qquad\text{for }r>0.
\end{equation}
To see this, one simply applies $h$ and $h^{-1}$ to the chains in
the definition.

We first record the subadditivity of the component count under finite
unions and its additivity under a separation condition.
\begin{lemma}\label{lem:union}
	Let $r>0$ and let $q\geq1$ be an integer.
	Let $A_1,\ldots,A_q$ be non-empty compact subsets of $\R^d$
	and set $A=\bigcup_{k=1}^q A_k$.
	Then the following statements hold.
	\begin{itemize}
		\item [(i)]	We have $ 	N_A(r)\leq\sum_{k=1}^q N_{A_k}(r). $
	 	\item	[(ii)]		If $A_k\cap A_{k'}\neq\varnothing$ for some $k\neq k'$,
		then
	\begin{equation*}  			N_A(r)\leq\sum_{k=1}^q N_{A_k}(r)-1. \end{equation*}

		\item [(iii)]
		If $\dist(A_k,A_{k'})>r$ for all $k\neq k'$, then
	\begin{equation*}	N_A(r)=\sum_{k=1}^q N_{A_k}(r).   \end{equation*}
	\end{itemize}
\end{lemma}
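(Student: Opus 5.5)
The plan is to view $r$-components as equivalence classes and compare partitions. For a compact set $B$, write $x\sim_{B,r}y$ if $x$ and $y$ can be joined by an $r$-chain in $B$. This is an equivalence relation on $B$. Its classes are exactly the $r$-components of $B$: each class is $r$-connected, and any $r$-connected superset of a class lies inside that same class. So $N_B(r)$ counts the classes.

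The key observation is that if $x,y\in A_k$ and $x\sim_{A_k,r}y$, then $x\sim_{A,r}y$, because a chain in $A_k$ is also a chain in $A$. Hence every $r$-component of $A_k$ lies in a single $r$-component of $A$. This gives a map
\begin{equation*}
\Theta:\bigsqcup_{k=1}^q\{\text{$r$-components of }A_k\}\to\{\text{$r$-components of }A\},
\end{equation*}
sending a component $C$ of $A_k$ to the component of $A$ containing it. The domain is a formal disjoint union, so a component of $A_k$ and one of $A_{k'}$ count separately even if they coincide as sets.

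\emph{Part (i).} The map $\Theta$ is surjective. Any component $D$ of $A$ contains some point $x$, and $x$ lies in some $A_k$. Then the component of $A_k$ through $x$ maps to $D$. Counting gives $N_A(r)\le\sum_k N_{A_k}(r)$.

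\emph{Part (ii).} Suppose $x\in A_k\cap A_{k'}$ with $k\ne k'$. The component of $A_k$ through $x$ and the component of $A_{k'}$ through $x$ are distinct elements of the domain. Both map to the component of $A$ containing $x$. So the surjection $\Theta$ is not injective, and the domain has at least one more element than the image. This gives the bound with $-1$.

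\emph{Part (iii).} Assume $\dist(A_k,A_{k'})>r$ for $k\ne k'$. Along any $r$-chain $x_0,\dots,x_\ell$ in $A$, consecutive points satisfy $|x_{i+1}-x_i|\le r$, so they must lie in the same $A_k$. In particular the $A_k$ are pairwise disjoint, so every point of $A$ belongs to exactly one $A_k$. By induction, the whole chain stays in the $A_k$ containing $x_0$. Hence $x\sim_{A,r}y$ implies that $x,y$ lie in a common $A_k$ and $x\sim_{A_k,r}y$. This shows $\Theta$ is injective: if components $C\subseteq A_k$ and $C'\subseteq A_{k'}$ have the same image, pick $x\in C$ and $y\in C'$. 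Then $x\sim_{A,r}y$, so $k=k'$ and $x\sim_{A_k,r}y$, which forces $C=C'$. Combined with surjectivity, this yields equality.

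The only point needing care is the identification of components with equivalence classes, which needs maximality. Once that is in place, each part is a short counting argument, and I expect no real obstacle.
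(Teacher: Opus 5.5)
Your proposal is correct and is essentially the paper's argument. The paper's quantity $M(C)$, the number of $r$-components of the various $A_k$ lying in a component $C$ of $A$, is exactly the fiber size $\#\Theta^{-1}(C)$ of your map. Your surjectivity, non-injectivity in (ii), and injectivity in (iii) correspond to $M(C)\geq 1$, $M(C(x))\geq 2$ and $M(C)=1$ in the paper's identity $\sum_k N_{A_k}(r)-N_A(r)=\sum_C (M(C)-1)$.
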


\begin{proof}
	Let $\mathcal C_r(E)$ denote the family of $r$-components of $E\subset \R^d$.
	For $C\in\mathcal C_r(A)$, set
	\begin{equation*}
	M(C)=\sum_{k=1}^q
	\#\{F\in\mathcal C_r(A_k):F\subseteq C\}.
	\end{equation*}
	Each $r$-component of $A_k$ lies in a unique $r$-component of $A$,
	and each $r$-component of $A$ contains at least one $r$-component from some $A_k$.
	Thus $M(C)\geq1$ and
	\begin{equation}\label{eq-MC}
		\begin{aligned}
			\sum_{k=1}^q N_{A_k}(r)-N_A(r)
			&=\sum_{C\in\mathcal C_r(A)}M(C)-\#\mathcal C_r(A)\\
			&=\sum_{C\in\mathcal C_r(A)}(M(C)-1)\geq0,
		\end{aligned}
	\end{equation}
	which proves (i).

	If $x\in A_k\cap A_{k'}$ with $k\neq k'$, let $C(x)$ be the
	$r$-component of $A$ containing $x$.
	The components of $A_k$ and $A_{k'}$ containing $x$ both lie
	in $C(x)$ and are counted separately, so $M(C(x))\geq2$.
	By \eqref{eq-MC},
	\begin{equation*}
	\sum_{k=1}^q N_{A_k}(r)-N_A(r)
	\geq M(C(x))-1\geq1,
	\end{equation*}
	which proves (ii).

	Finally, if $\dist(A_k,A_{k'})>r$ for all $k\neq k'$, every
	$r$-chain in $A$ lies entirely in one $A_k$.
	Hence $M(C)=1$ for every $C\in\mathcal C_r(A)$,
	and (iii) follows   from \eqref{eq-MC}.
\end{proof}

We next compare two generating systems of the same set.
Let $K\subset\R^d$ be a non-singleton compact set which is the
attractor of two finite IFSs $\Phi=\{\phi_i\}_{i=1}^m$ and
$\Psi=\{\psi_j\}_{j=1}^n$ of contracting similitudes.
Let $a_i$ and $b_j$ denote the contraction ratios of $\phi_i$ and $\psi_j$,
respectively, and write $N(r)=N_K(r)$ for $r>0$.

	\begin{lemma}\label{lem:strict-counting}
		Suppose that $\Psi$ satisfies the SSC, whereas $\Phi$
		does not satisfy the SSC. Set
		\begin{equation*}
		\delta:=\min_{1\leq j\neq k\leq n}
		\dist\bigl(\psi_j(K),\psi_k(K)\bigr),
		\qquad a_*:=\min_{1\leq i\leq m}a_i,
		\qquad \delta_*:=a_*\delta.
		\end{equation*}
		Then
		\begin{equation}\label{eq:nbj}
			N(r)=\sum_{j=1}^n N(r/b_j)  \quad\text{for } 0<r<\delta,
		\end{equation}
	and
		\begin{equation}\label{eq:strict-counting}
			\sum_{i=1}^m N(r/a_i)\geq N(r)+n, \quad\text{for } 0<r<\delta_*.
		\end{equation}
	\end{lemma}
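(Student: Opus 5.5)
The plan is to prove \eqref{eq:nbj} directly from Lemma~\ref{lem:union}(iii) and the scaling identity \eqref{eq:scaling}. For \eqref{eq:strict-counting}, I will refine $K$ by both systems at once, writing it as a union of the $mn$ sets $\psi_j\phi_i(K)$, so that every $\Psi$-piece contains an overlap coming from the failure of the SSC for $\Phi$.

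\emph{Step 1: proof of \eqref{eq:nbj}.} We have $K=\bigcup_{j=1}^n\psi_j(K)$, and for $0<r<\delta$ the pieces satisfy $\dist(\psi_j(K),\psi_k(K))\geq\delta>r$ for $j\neq k$. Lemma~\ref{lem:union}(iii) then gives
\begin{equation*}
N(r)=\sum_{j=1}^n N_{\psi_j(K)}(r).
\end{equation*}
Applying \eqref{eq:scaling} with $h=\psi_j$ and $c=b_j$ gives $N_{\psi_j(K)}(r)=N(r/b_j)$, which is \eqref{eq:nbj}.

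\emph{Step 2: a strict inequality inside each $\Psi$-piece.} Since $\Phi$ fails the SSC, we may fix indices $i\neq i'$ with $\phi_i(K)\cap\phi_{i'}(K)\neq\varnothing$. For each $j$, apply $\psi_j$ to the identity $K=\bigcup_{k=1}^m\phi_k(K)$ to get
\begin{equation*}
\psi_j(K)=\bigcup_{k=1}^m\psi_j\phi_k(K),
\end{equation*}
where the two pieces $\psi_j\phi_i(K)$ and $\psi_j\phi_{i'}(K)$ still intersect. Lemma~\ref{lem:union}(ii) and \eqref{eq:scaling}, with ratio $b_ja_k$, then give, for every $r>0$,
\begin{equation*}
N_{\psi_j(K)}(r)\leq\sum_{k=1}^m N\bigl(r/(a_kb_j)\bigr)-1.
\end{equation*}

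\emph{Step 3: assembling.} Let $0<r<\delta_*\leq\delta$. Summing Step 2 over $j$ and using the first identity of Step 1 gives
\begin{equation*}
N(r)\leq\sum_{k=1}^m\sum_{j=1}^n N\bigl(r/(a_kb_j)\bigr)-n.
\end{equation*}
For each $k$ we have $r/a_k\leq r/a_*<\delta$, so \eqref{eq:nbj} applied at scale $r/a_k$ yields $\sum_{j=1}^n N(r/(a_kb_j))=N(r/a_k)$. Substituting this into the double sum gives $N(r)\leq\sum_k N(r/a_k)-n$, which is \eqref{eq:strict-counting}.

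The only point that needs care is obtaining the constant $n$ rather than $1$. This is exactly why one should refine by $\Psi$ first and place a copy of the $\Phi$-overlap inside each $\psi_j(K)$, instead of applying Lemma~\ref{lem:union}(ii) once to $K=\bigcup_i\phi_i(K)$. The same choice explains the threshold $\delta_*=a_*\delta$: it is what makes \eqref{eq:nbj} valid at every scale $r/a_k$.
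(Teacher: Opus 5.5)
Your proof is correct and follows essentially the same route as the paper. Via \eqref{eq:scaling}, your Step~2 inequality is exactly the paper's bound $\sum_{i=1}^m N(r/a_i)\geq N(r)+1$ (valid for every $r>0$) evaluated at scale $r/b_j$, and your Step~3 is the paper's double-sum computation, using \eqref{eq:nbj} at $r$ and at each $r/a_k$.
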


	\begin{proof}
		Since $\Psi$ satisfies the SSC,   \eqref{eq:scaling} and Lemma~\ref{lem:union}(iii) give \eqref{eq:nbj} for every $ 0<r<\delta $.

		On the other hand, since $\Phi$ does not satisfy the SSC, \eqref{eq:scaling} and Lemma~\ref{lem:union}(ii) give
		\begin{equation}\label{eq-nonssc}
			\sum_{i=1}^m N(r/a_i)\geq N(r)+1
			\qquad\text{for every }r>0.
		\end{equation}

		Fix $0<r<\delta_*$. Then $r/a_i<\delta$ for every $i$.
		Applying \eqref{eq:nbj} at each scale $r/a_i$
		and summing over $i$, we obtain
		\begin{align*}
			0
			&=\sum_{i=1}^m
			\left(\sum_{j=1}^n N\!\left(\frac{r}{b_j a_i}\right)
			-N(r/a_i)\right)\\
			&=\sum_{j=1}^n\sum_{i=1}^m
			N\!\left(\frac{r/b_j}{a_i}\right)
			-\sum_{i=1}^m N(r/a_i)\\
			&\geq\sum_{j=1}^n\bigl(N(r/b_j)+1\bigr)
			-\sum_{i=1}^m N(r/a_i) \quad \text{(by  \eqref{eq-nonssc})}\\
			&=N(r)+n-\sum_{i=1}^m N(r/a_i),
		\end{align*}
		which  yields \eqref{eq:strict-counting}.
	\end{proof}

\section{Proof of the main theorem}\label{sec:proof}

In this section, we prove Theorem~\ref{thm:main} by combining the counting inequalities from Section~\ref{sec:components} with
integration over intervals bounded away from zero.

\begin{proof}[Proof of Theorem~\ref{thm:main}]
	Suppose, to the contrary, that $K$ is also the attractor of a finite
	IFS $\Psi=\{\psi_j\}_{j=1}^n$ of contracting similitudes satisfying
	the SSC. Let $a_i$ and $b_j$ denote the contraction ratios
	of $\phi_i$ and $\psi_j$, respectively. Since $K$ is not a singleton,
	$m,n\geq2$.

	Since the SSC implies the OSC, both $\Phi$ and $\Psi$ satisfy
	the OSC. By \eqref{eq:osc-dimension}, $s:=s_\Phi=s_\Psi=\dimH K>0$, and
	\begin{equation}\label{eq:critical-weights}
		\sum_{i=1}^m a_i^s=\sum_{j=1}^n b_j^s=1.
	\end{equation}
	Let $a_*$, $\delta$ and $\delta_*$ be defined as in
	Lemma~\ref{lem:strict-counting}, and write $N(r)=N_K(r)$
	for $r>0$.

	Let $0<\varepsilon<R<\delta_*$.
	Multiplying \eqref{eq:strict-counting} by $r^{s-1}$ and integrating, we obtain
	\begin{equation}\label{eq:integrated-counting}
		\int_\varepsilon^R r^{s-1}
		\left(\sum_{i=1}^m N(r/a_i)-N(r)\right)\,dr
		\geq\frac{n}{s}(R^s-\varepsilon^s).
	\end{equation}
The integral on the left-hand side of
\eqref{eq:integrated-counting} is finite, since $N$ is locally
bounded on $(0,\infty)$.	Changing variables and using $\sum_i a_i^s=1$, we obtain
	\begin{equation}\label{eq-equna}
		\begin{aligned}
			&\int_\varepsilon^R r^{s-1}
			\left(\sum_{i=1}^m N(r/a_i)-N(r)\right)\, dr\\
			& =\sum_{i=1}^m a_i^s
			\int_{\varepsilon/a_i}^{R/a_i}r^{s-1}N(r)\, dr
			-\int_\varepsilon^R r^{s-1}N(r)\,dr\\
			& =\sum_{i=1}^m a_i^s \left(
			\int_{\varepsilon/a_i}^{R/a_i}r^{s-1}N(r)\, dr
			-\int_\varepsilon^R r^{s-1}N(r)\,dr\right)\\
			& =\sum_{i=1}^m a_i^s
			\left(\int_R^{R/a_i}r^{s-1}N(r)\,dr
			-\int_\varepsilon^{\varepsilon/a_i}
			r^{s-1}N(r)\, dr\right).
		\end{aligned}
	\end{equation}
	Define
	\begin{equation}\label{def-ft}
		F(t):=\sum_{i=1}^m a_i^s
		\int_t^{t/a_i}r^{s-1}N(r)\,dr,
		\qquad 0<t<\delta_*.
	\end{equation}
	Since $N$ is non-increasing, we have
	\begin{align*}
		0\leq F(t)
		&\leq N(t)\sum_{i=1}^m a_i^s
		\int_t^{t/a_i}r^{s-1}\,dr\\
		&=\frac{m-1}{s}\,t^sN(t)<\infty.
	\end{align*}
	Combining \eqref{eq-equna} and \eqref{def-ft} with
	\eqref{eq:integrated-counting}, we obtain
	\begin{equation}\label{eq:boundary-increasing}
		F(R)-F(\varepsilon)
		\geq\frac{n}{s}(R^s-\varepsilon^s)
		\qquad\text{for }0<\varepsilon<R<\delta_*.
	\end{equation}
	In particular, $F$ is strictly increasing on $(0,\delta_*)$.

	Set $b_*:=\min_{1\leq j\leq n}b_j$ and fix
	$0<t<b_*\delta_*$. Then $t<t/b_j<\delta_*$ for every $j$.
	For every $i$ and every $r\in[t,t/a_i]$, we have
	$r\leq t/a_*<\delta$, so \eqref{eq:nbj} applies throughout
	the interval of integration. Substituting \eqref{eq:nbj}
	into \eqref{def-ft} gives
	\begin{align*}
		F(t)
		&=\sum_{i=1}^m a_i^s
		\int_t^{t/a_i}r^{s-1}
		\sum_{j=1}^n N(r/b_j)\,dr\\
		&=\sum_{j=1}^n b_j^s\sum_{i=1}^m a_i^s
		\int_{t/b_j}^{t/(b_j a_i)}r^{s-1}N(r)\, dr\\
		&=\sum_{j=1}^n b_j^s F(t/b_j).
	\end{align*}
	On the other hand, since $t<t/b_j<\delta_*$, the strict
	monotonicity of $F$ implies that $F(t/b_j)>F(t)$ for every $j$.
	Together with \eqref{eq:critical-weights}, this yields
	\begin{equation*}
	F(t)=\sum_{j=1}^n b_j^sF(t/b_j)
	>\sum_{j=1}^n b_j^sF(t)=F(t),
	\end{equation*}
	which is a contradiction. Therefore, no such IFS $\Psi$ exists.
\end{proof}

 \section{Proofs of the corollaries}\label{sec:consequences}

In this section, we   prove Corollaries~\ref{cor:equivalent-criteria}
and~\ref{cor:embedding-completion}.

\begin{proof}[Proof of Corollary~\ref{cor:equivalent-criteria}]
	Since $K$ admits an SSC generating IFS, we have $s>0$ and
	$0<\cH^s(K)<\infty$ by~\eqref{eq:osc-dimension}. If $\Phi$ satisfies
	the SSC, then it satisfies the OSC, and~\eqref{eq:osc-dimension}
	gives $s_\Phi=s$.

	Suppose now that $\Phi$ does not satisfy the SSC. We first show that  $ s_\Phi>s $. By the scaling property of Hausdorff measure,
	\begin{equation*}
	\cH^s(K)
	\leq\sum_{i=1}^m\cH^s(\phi_i(K))
	=\cH^s(K)\sum_{i=1}^m a_i^s.
\end{equation*}
	Hence $\sum_{i=1}^m a_i^s\geq1$. We claim that the inequality is strict. Indeed, if  $\sum_{i=1}^m a_i^s =1$,  then $s_\Phi=s$, and therefore
	$\cH^{s_\Phi}(K)=\cH^s(K)>0$.   By
	Schief's theorem~\cite{Schief1994}, $\Phi$   satisfies the OSC,
	and then by Theorem~\ref{thm:main}, $\Phi$ satisfies the SSC, which is a contradiction. Thus $\sum_{i=1}^m a_i^s>1$,  and consequently $s_\Phi>s$.

	Finally, suppose that $ 	\cH^s\bigl(\phi_i(K)\cap \phi_{i'}(K)\bigr)=0 $ for all $ 1\leq i<i' \leq m. $
Then
	\begin{equation*}
	\cH^s(K)
	=\sum_{i=1}^m\cH^s(\phi_i(K))
	=\cH^s(K)\sum_{i=1}^m a_i^s,
\end{equation*}
Since $0<\cH^s(K)<\infty$, this implies
$  \sum_{i=1}^m a_i^s=1,$
contradicting the strict inequality obtained above. Hence
\begin{equation*}
\cH^s\bigl(\phi_i(K)\cap \phi_{i'}(K)\bigr)>0
\end{equation*}
for some distinct $i,i'$. This completes the proof.
\end{proof}

\begin{proof}[Proof of Corollary~\ref{cor:embedding-completion}]
We prove the equivalence of the three conditions and establish the
final assertion in the proof of   (i) $\Rightarrow$ (iii).

	(iii)  $ \Rightarrow $ (ii). This follows since the SSC implies the OSC.

	(ii)  $ \Rightarrow $ (iii).
	Suppose that $h$ belongs to a finite generating IFS of contracting
	similitudes of $K$ satisfying the OSC.
	Since $K$ also admits an SSC generating IFS,
	Theorem~\ref{thm:main} implies that this system satisfies the SSC.

	(iii)  $ \Rightarrow $ (i).
	Let $\{\psi_j\}_{j=1}^n$ be an SSC generating IFS of $K$ containing $h$. The complement of $h(K)$ in $K$ is a finite union of the
	other compact first-level pieces. It is therefore closed in $K$,
	so $h(K)$ is relatively open in $K$.

	(i)  $ \Rightarrow $ (iii).  Suppose that $h(K)$ is relatively open
	in $K$. We construct an SSC generating IFS containing $h$ and
	having the form stated in the final assertion.

	Fix any SSC
	generating IFS $\Psi=\{\psi_j\}_{j=1}^n$ of $K$.
	Let $c_h\in(0,1)$ be the contraction ratio of $h$, let
	$b_j\in(0,1)$ be that of $\psi_j$, and let
	$b_{\max}=\max_{1\leq j\leq n}b_j$.
	The sets $h(K)$ and $K\setminus h(K)$ are disjoint non-empty
	compact sets. Indeed, the complement is non-empty since
	\begin{equation*}
	\diam h(K)=c_h\diam K<\diam K.
\end{equation*}
	Therefore
	\begin{equation*}
	\eta=\dist\bigl(h(K),K\setminus h(K)\bigr)>0.
\end{equation*}
	Choose an integer $\ell\geq1$ such that
	$b_{\max}^{\ell}\diam K<\eta$.
	For $w=(j_1,\ldots,j_\ell)\in\{1,\ldots,n\}^{\ell}$, write
	$\psi_w=\psi_{j_1}\circ\cdots\circ \psi_{j_\ell}$.
	By the SSC, the sets $\psi_w(K)$ form a finite partition of $K$,
	and each has diameter less than $\eta$.
	Consequently, each is contained either in $h(K)$ or in
	$K\setminus h(K)$.

	Set
	\begin{equation*}
	\mathcal J=
	\bigl\{
	w\in\{1,\ldots,n\}^{\ell}:
	\psi_w(K)\subseteq K\setminus h(K)
	\bigr\}.
\end{equation*}
	Then $\mathcal J$ is non-empty and
	\begin{equation*}
	K\setminus h(K)=\bigcup_{w\in\mathcal J}\psi_w(K),
\end{equation*}
	with pairwise disjoint sets in the union.
	Thus $ 	\{h\}\cup\{\psi_w:w\in\mathcal J\} $
	is a finite generating IFS of contracting similitudes of $K$
	satisfying the SSC.
	This proves (iii).
	Since $\Psi$ was arbitrary, the same construction also proves
	the final assertion.
\end{proof}

\section{A question on graph-directed systems}\label{sec:questions}

Theorem~\ref{thm:main} shows that a self-similar set cannot admit both
an SSC generating IFS and an OSC generating IFS which fails the SSC,
without any relation being imposed on the contraction ratios of the
two systems. It is natural to ask whether an analogous statement holds
for graph-directed systems when both the underlying graph and the
attractor vector are fixed. We use the standard setting of Mauldin and
Williams~\cite{MauldinWilliams1988} and Das and Edgar~\cite{DasEdgar2005}.

Let $G=(V,E)$ be a finite strongly connected directed multigraph with
$V\neq\varnothing$, such that each vertex is the initial vertex of at
least one edge.  For
 $e\in E$, write $i(e)$ and $t(e)$ for the initial and terminal
 vertices of $e$, respectively. To each $e\in E$, associate a
 contracting similitude $\phi_e:\R^d\to\R^d$. There is a unique family
 $(K_v)_{v\in V}$ of non-empty compact subsets of $\R^d$ such that
 \begin{equation*}
 K_v=\bigcup_{i(e)=v}\phi_e(K_{t(e)}),\qquad v\in V.
 \end{equation*}
 We call $(K_v)_{v\in V}$ the attractor of the graph-directed system.
 Here and below, the union is taken over all edges with initial vertex
 $v$, with parallel edges regarded as distinct edges.

 Recall that the graph-directed system is said to satisfy the open set
 condition (OSC) if there exist non-empty open sets $U_v\subset\R^d$ such that  for each fixed $v$, the sets $\phi_e(U_{t(e)})$, with $i(e)=v$,
 are pairwise disjoint subsets of $U_v$.
  It is said to satisfy the strong separation
 condition (SSC) if, for each $v\in V$, the sets $\phi_e(K_{t(e)})$, with $i(e)=v$,
 are pairwise disjoint.

 	\begin{question}\label{qu:graph-directed}
 		Let $\Phi=\{\phi_e\}_{e\in E}$ and $\Psi=\{\psi_e\}_{e\in E}$ be two
 		graph-directed systems of contracting similitudes associated with the
 		same finite strongly connected directed multigraph $G$. Suppose that
 		they have the same attractor $(K_v)_{v\in V}$, where each $K_v$ is
 		non-singleton. If $\Phi$ satisfies the graph-directed OSC and $\Psi$
 		satisfies the graph-directed SSC, does $\Phi$ necessarily satisfy the
 		graph-directed SSC?
 	\end{question}

 When $G$ consists of a single vertex, Question~\ref{qu:graph-directed}
 has an affirmative answer by Theorem~\ref{thm:main}. We remark that
 the proof of Theorem~\ref{thm:main} does not extend directly to the
 graph-directed setting.

	For $v\in V$ and $r>0$, let $ 	N_v(r)=N_{K_v}(r), $  and write $\mathbf N=(N_v)_{v\in V}$. Let $a_e$ and $b_e$ denote the
contraction ratios of $\phi_e$  and
 $\psi_e$, respectively, and write
 $\mathbf a=(a_e)_{e\in E}$ and $\mathbf b=(b_e)_{e\in E}$. For a vector
 $\mathbf F=(F_v)_{v\in V}$ of functions on $(0,\infty)$, define

 \begin{equation*}
 (T_{\mathbf a}\mathbf F)_v(r)
 =
 \sum_{e\in E: i(e)=v}
 F_{t(e)}(r/a_e),
 \qquad
 (T_{\mathbf b}\mathbf F)_v(r)
 =
 \sum_{e\in E: i(e)=v}
 F_{t(e)}(r/b_e).
 \end{equation*}

 Since $\Psi$ satisfies the graph-directed SSC, for all sufficiently
 small $r>0$, $  \mathbf N(r)=(T_{\mathbf b}\mathbf N)(r). $

 Set $ \mathbf D=T_{\mathbf a}\mathbf N-\mathbf N. $    For all
 sufficiently small $r>0$, we have
 \begin{equation*}
 \mathbf D
 =
 T_{\mathbf a}T_{\mathbf b}\mathbf N-T_{\mathbf b}\mathbf N,
 \qquad
 T_{\mathbf b}\mathbf D
 =
 T_{\mathbf b}T_{\mathbf a}\mathbf N-T_{\mathbf b}\mathbf N.
 \end{equation*}

 In the one-vertex case, the dilation operators commute. The same
 commutation underlies the interchange of the two families of
 dilations in Lemma~\ref{lem:strict-counting} and in the integral
 calculation in Section~\ref{sec:proof}. For a general graph-directed
 system, however, $T_{\mathbf a}$ and $T_{\mathbf b}$ need not
 commute. Thus the identity $\mathbf D=T_{\mathbf b}\mathbf D$
 does not follow from the displayed formulas, and the scalar
 counting and integral argument does not directly resolve
 Question~\ref{qu:graph-directed}.

 \medskip
\noindent\textbf{Acknowledgements}. This work was supported by NKFIH,
Grant K144059, K152858. The author thanks K\'aroly Simon for his helpful
suggestions on the exposition of the first draft.

\section*{Disclosure of AI assistance}
The author used ChatGPT (OpenAI) to improve the language and
organization of the manuscript, assist in checking intermediate arguments,
and identify relevant references. The author independently checked the
mathematical arguments and cited references, reviewed and edited the
AI-assisted material, and takes full responsibility for the content
of the paper.


\begin{thebibliography}{99}

	\bibitem{Algom2020}
	A.~Algom,
	\emph{Affine embeddings of Cantor sets in the plane},
	J. Anal. Math. \textbf{140} (2020), no.~2, 695--757.

\bibitem{BandtGraf1992}
	C.~Bandt, S.~Graf,
	\emph{Self-similar sets. VII. A characterization of self-similar fractals with positive Hausdorff measure},
	Proc. Amer. Math. Soc. \textbf{114} (1992), no.~4, 995--1001.

\bibitem{BaranySimonSolomyak2023}
	B.~B\'ar\'any, K.~Simon, B.~Solomyak,
	{\em Self-similar and Self-affine Sets and Measures},
	Mathematical Surveys and Monographs, vol.~276,
	American Mathematical Society, Providence, RI, 2023.

	\bibitem{BishopPeres2017}
	C.~J.~Bishop, Y.~Peres,
	{\em Fractals in Probability and Analysis},
	Cambridge Studies in Advanced Mathematics, vol.~162,
	Cambridge University Press, Cambridge, 2017.

	\bibitem{DasEdgar2005}
	M.~Das, G.~A.~Edgar,
	\emph{Separation properties for graph-directed self-similar fractals},
	Topology Appl. \textbf{152} (2005), no.~1--2, 138--156.

\bibitem{DengLau2013}
	Q.-R.~Deng, K.-S.~Lau,
	\emph{On the equivalence of homogeneous iterated function systems},
	Nonlinearity \textbf{26} (2013), no.~10, 2767--2775.

\bibitem{DengLau2017}
	Q.-R.~Deng, K.-S.~Lau,
	\emph{Structure of the class of iterated function systems that generate the same self-similar set},
	J. Fractal Geom. \textbf{4} (2017), no.~1, 43--71.

\bibitem{ElekesKeletiMathe2010}
	M.~Elekes, T.~Keleti, A.~M\'ath\'e,
	\emph{Self-similar and self-affine sets: measure of the intersection of two copies},
	Ergodic Theory Dynam. Systems \textbf{30} (2010), no.~2, 399--440.

\bibitem{Falconer2003}
	K.~J.~Falconer,
	{\em Fractal Geometry: Mathematical Foundations and Applications},
	second edition, John Wiley \& Sons, Chichester, 2003.

	\bibitem{FengRuanXiong2023}
	D.-J.~Feng, H.-J.~Ruan, Y.~Xiong,
	\emph{On separation properties for iterated function systems of similitudes},
	Methods Appl. Anal. \textbf{30} (2023), no.~1, 17--26.

\bibitem{FengWang2009}
	D.-J.~Feng, Y.~Wang,
	\emph{On the structures of generating iterated function systems of Cantor sets},
	Adv. Math. \textbf{222} (2009), no.~6, 1964--1981.

\bibitem{Hutchinson1981}
	J.~E.~Hutchinson,
	\emph{Fractals and self-similarity},
	Indiana Univ. Math. J. \textbf{30} (1981), no.~5, 713--747.

\bibitem{MauldinWilliams1988}
	R.~D.~Mauldin, S.~C.~Williams,
	\emph{Hausdorff dimension in graph directed constructions},
	Trans. Amer. Math. Soc. \textbf{309} (1988), no.~2, 811--829.

\bibitem{RaoRuanYang2008}
	H.~Rao, H.-J.~Ruan, Y.-M.~Yang,
	\emph{Gap sequence, Lipschitz equivalence and box dimension of fractal sets},
	Nonlinearity \textbf{21} (2008), no.~6, 1339--1347.

\bibitem{Schief1994}
	A.~Schief,
	\emph{Separation properties for self-similar sets},
	Proc. Amer. Math. Soc. \textbf{122} (1994), no.~1, 111--115.

\bibitem{Xiao2024}
	J.-C.~Xiao,
	\emph{On a self-embedding problem for self-similar sets},
	Ergodic Theory Dynam. Systems \textbf{44} (2024), no.~10, 3002--3011.

\end{thebibliography}
\end{document}